\documentclass[12pt]{amsart}
\usepackage{amscd,amsmath,amsthm,amssymb,graphics}
\usepackage{pstcol,pst-plot,pst-3d}%\usepackage[T1]{fontenc}
\usepackage{lmodern,pst-node}%\usepackage{geometric}
\usepackage[dvips]{graphicx}
\usepackage{multicol}
\usepackage{epic,eepic}
\usepackage{amsfonts,amssymb,amscd,amsmath,enumitem,verbatim}
\psset{unit=0.7cm,linewidth=0.8pt,arrowsize=2.5pt 4}
% for vertex a circle with radius 0.5 mm

% for fat lines
\newpsstyle{fatline}{linewidth=1.5pt}
\newpsstyle{fyp}{fillstyle=solid,fillcolor=verylight}
\definecolor{verylight}{gray}{0.97}
\definecolor{light}{gray}{0.9}
\definecolor{medium}{gray}{0.85}
\definecolor{dark}{gray}{0.6}

\usepackage{lineno}

%    Absolute value notation

%    Blank box placeholder for figures (to avoid requiring any
%    particular graphics capabilities for printing this document).

\unitlength=0.7cm

%------    GENERAL MACROS    -----
%
% Standard rings and fields, affine and projective space
%
               % the font for N,Z,Q,R,C

%
%------------------------------------------------
% Symbols in "Fraktur"
%
\def\frk{\frak}               % font for "Fraktur"

\def\Phi{{\frk n}}
\def\Phi{{\frk N}}
%
%------------------------------------------------

\def\MR{{\mathcal R}}

\def\MG{{\mathcal G}}

% Small letters in bold
%

%
\def\opn#1#2{\def#1{\operatorname{#2}}} % to make operators
%------------------------------------------------
% Numerical invariants of rings, ideals, and modules
%
\opn\chara{char} \opn\length{\ell} \opn\pd{pd} \opn\rk{rk}
\opn\projdim{proj\,dim} \opn\injdim{inj\,dim} \opn\rank{rank}
\opn\depth{depth} \opn\grade{grade} \opn\height{height}
\opn\embdim{emb\,dim} \opn\codim{codim}

\opn\Tr{Tr} \opn\bigrank{big\,rank}
\opn\superheight{superheight}\opn\lcm{lcm}
\opn\trdeg{tr\,deg}%\emph{
\opn\reg{reg} \opn\lreg{lreg} \opn\ini{in} \opn\lpd{lpd}
\opn\size{size}\opn\bigsize{bigsize}
\opn\cosize{cosize}\opn\bigcosize{bigcosize}
\opn\sdepth{sdepth}\opn\sreg{sreg}
\opn\link{link}\opn\fdepth{fdepth}
%------------------------------------------------
% Divisors
%
\opn\div{div} \opn\Div{Div} \opn\cl{cl} \opn\Cl{Cl}
%
%------------------------------------------------

\let\epsilon\varepsilon
\let\phi=\varphi
\let\kappa=\varkappa
% Subsets of the spectrum of a ring
%
\opn\Spec{Spec} \opn\Supp{Supp} \opn\supp{supp} \opn\Sing{Sing}
\opn\Ass{Ass} \opn\Min{Min}\opn\Mon{Mon} \opn\dstab{dstab} \opn\astab{astab}
\opn\Syz{Syz}
%
%------------------------------------------------
% Standard operations on ideals and modules
%
\opn\Ann{Ann} \opn\Rad{Rad} \opn\Soc{Soc}
%
%------------------------------------------------
% Linear algebra and homology, endo- and automorphisms
%
\opn\Im{Im} \opn\Ker{Ker} \opn\Coker{Coker} \opn\Am{Am}
\opn\Hom{Hom} \opn\Tor{Tor} \opn\Ext{Ext} \opn\End{End}
\opn\Aut{Aut} \opn\id{id}

\opn\nat{nat}
\opn\pff{pf}%   \pf exists already
\opn\Pf{Pf} \opn\GL{GL} \opn\SL{SL} \opn\mod{mod} \opn\ord{ord}
\opn\Gin{Gin} \opn\Hilb{Hilb}\opn\sort{sort}
\opn\initial{init}
\opn\ende{end}
\opn\height{height}
\opn\type{type}
\opn\set{set}
%
%------------------------------------------------
% Convexity
%
\opn\aff{aff} \opn\con{conv} \opn\relint{relint} \opn\st{st}
\opn\lk{lk} \opn\cn{cn} \opn\core{core} \opn\vol{vol}
\opn\link{link} \opn\star{star}\opn\lex{lex}
%------------------------------------------------
% Graded rings and Rees algebras
\opn\gr{gr}

%
%------------------------------------------------
% Polynomials and power series
%

\def\pot#1#2{#1[\kern-0.28ex[#2]\kern-0.28ex]}

%
%------------------------------------------------
% Direct and inverse limits
%
\opn\dirlim{\underrightarrow{\lim}}
\opn\inivlim{\underleftarrow{\lim}}
%
%
% Names with a meaning
%

%
%------------------------------------------------
%
\let\to=\rightarrow

\def\Implies{\ifmmode\Longrightarrow \else
        \unskip${}\Longrightarrow{}$\ignorespaces\fi}
\def\implies{\ifmmode\Rightarrow \else
        \unskip${}\Rightarrow{}$\ignorespaces\fi}
\def\iff{\ifmmode\Longleftrightarrow \else
        \unskip${}\Longleftrightarrow{}$\ignorespaces\fi}

\let\:=\colon
 \theoremstyle{plain}
\newtheorem{Theorem}{Theorem}[section]
 
 \newtheorem{Corollary}[Theorem]{Corollary}
 \newtheorem{Proposition}[Theorem]{Proposition}

 \theoremstyle{definition}
 \newtheorem{Definition}[Theorem]{Definition}
 \newtheorem{Remark}[Theorem]{Remark}
 
 \newtheorem{Example}[Theorem]{Example}

%
% We like the var forms of some greek letters (as taught in German schools)
%
\let\epsilon\varepsilon
\let\kappa=\varkappa
%
%           We print on A4 paper
%
\textwidth=15cm \textheight=22cm \topmargin=0.5cm
\oddsidemargin=0.5cm \evensidemargin=0.5cm \pagestyle{plain}
%
%           The pf environment of AMSART needs a little help
%
%\def\qed{\ifhmode\textqed\fi
%      \ifmmode\ifinner\quad\qedsymbol\else\dispqed\fi\fi}
%\def\textqed{\unskip\nobreak\penalty50
%       \hskip2em\hbox{}\nobreak\hfil\qedsymbol
%       \parfillskip=0pt \finalhyphendemerits=0}
%\def\dispqed{\rlap{\qquad\qedsymbol}}
%\def\noqed{\def\qed{\relax}}
%
% ------    END OF GENERAL MACROS    -------
\opn\dis{dis}
\def\pnt{{\raise0.5mm\hbox{\large\bf.}}}

\opn\Lex{Lex}
%-- macro for local cohomology-----------------------------

%-- macro for a complicated condition for the extended
%-- Hochster's formula

%\linenumbers

\begin{document}
\title{Powers of $t$-spread principal Borel ideals}
\author {Claudia  Andrei, Viviana Ene, Bahareh Lajmiri}

\address{Claudia Andrei, Faculty of Mathematics and Computer Science, University of Bucharest, Str. Academiei 14, 
 010014 Bucharest, Romania} \email{claudiaandrei1992@gmail.com}

\address{Viviana Ene, Faculty of Mathematics and Computer Science, Ovidius University, Bd.\ Mamaia 124,
 900527 Constanta, Romania}  \email{vivian@univ-ovidius.ro}

\address{Bahareh Lajmiri, Department of Mathematics and Computer Science, Amirkabir University of Technology, 
424 Hafez Ave, Tehran, Iran} \email{bahareh.lajmiri@aut.ac.ir}

\thanks{This paper was written while the third author visited the Faculty of Mathematics and Computer Science, Ovidius University of Constanta. The research of the third author was partially supported by Amirkabir University of Technology.}

\begin{abstract}
We prove that $t$-spread principal Borel ideals are sequentially Cohen-Macaulay and study their powers. We show that these ideals 
possess the strong persistence property and compute their limit depth.
\end{abstract}

\subjclass[2010]{13D02,13H10, 05E40, 13C14}
\keywords{$t$-spread principal Borel ideals, persistence property, limit depth, sequentially Cohen-Macaulay ideals}

\maketitle
\section*{Introduction}

In this paper, we study $t$-spread prinipal Borel ideals. They have been recently introduced in \cite{EHQ}. Let $K$ be a field and $S=K[x_1,\ldots,x_n]$ the polynomial ring in $n$ variables over $K.$ Let $t\geq 0$ be an integer. A monomial $x_{i_1}\cdots x_{i_d}\in S$ with $i_1\leq \cdots \leq i_d$ is called \emph{$t$-spread} if $i_j-i_{j-1}\geq t$ for $2\leq j\leq d.$ For example, every monomial in $S$ is $0$-spread and every squarefree monomial is $1$-spread. 

We recall from \cite{EHQ} that a monomial ideal $I\subset S$ with the minimal system of monomial generators $G(I)$
is called \emph{$t$-spread strongly stable} if it satisfies the following condition: for all $u\in G(I)$ and $j\in \supp(u),$ if $i<j$ and $x_i(u/x_j)$ is $t$-spread, then $x_i(u/x_j)\in I.$  A monomial  ideal $I\subset S$ is called \emph{$t$-spread principal Borel} if there exists a monomial $u\in G(I)$ such that $I=B_t(u)$ where $B_t(u)$ denotes 
the smallest $t$-spread strongly stable ideal which contains $u.$ For example, for an  integer $d\geq 2,$ if 
$u=x_{n-(d-1)t}\cdots x_{n-t}x_n$, then $B_t(u)$ is minimally generated by all the 
$t$-spread monomials of degree $d$ in $S.$ In this case, we call $B_t(u)$ the \emph{$t$-spread Veronese ideal} generated in degree $d$ and denote it $I_{n,d,t}.$ 

Throughout this paper we consider $t$-spread monomial ideals with $t\geq 1. $ In particular, they are squarefree monomial ideals. As it was observed in \cite{EHQ}, if $u=x_{i_1}\cdots x_{i_d}$ is a $t$-spread monomial in $S$ then 
a monomial $x_{j_1}\cdots x_{j_d}\in G(B_t(u))$ if and only if the following two conditions hold:
\begin{enumerate}
	\item [(i)] $j_k\leq i_k$ for $1\leq k\leq d,$
	\item [(ii)] $j_k-j_{k-1}\geq t$ for $2\leq k\leq t.$
\end{enumerate}

In \cite[Theorem 1.4]{EHQ} it was shown that every $t$-spread strongly stable ideal has linear quotients with respect to the pure lexicographic 
order in $S.$ In addition, in \cite[Corollary 2.5]{EHQ} it was shown that a $t$-spread strongly stable ideal generated in a single degree  is Cohen-Macaulay if and only if it is a $t$-spread Veronese ideal. In particular, it follows that every $t$-spread principal Borel ideal has linear quotients and such an ideal is Cohen-Macaulay if and only if it is $t$-spread Veronese. 

Since $B_t(u)$  is a squarefree monomial ideal, we may interpret it as the Stanley-Reisner ideal of a simplicial complex $\Delta.$ In the 
proof of Theorem~\ref{ass}, we characterize the facets of $\Delta$. This  gives explicitly  all the generators of the ideal 
of the Alexander dual  $\Delta^{\vee}.$ We then derive that the Stanley-Reisner ideal of $\Delta^{\vee}$ has linear 
quotients, which yields the sequential Cohen-Macaulay property of  
$B_t(u)$ by Alexander duality; see Theorem~\ref{SCM}. 

In Section~\ref{two}, we consider the Rees algebra $\MR(B_t(u))$ of a $t$-spread principal Borel ideal $B_t(u).$ In 
Proposition~\ref{elexchange}, we show that  $B_t(u)$ satisfies the 
$\ell$-exchange property with respect to the sorting order $<_{sort}.$ This allows us to describe in Theorem~\ref{GBRees} the reduced 
Gr\"obner basis of the 
defining ideal of $\MR(B_t(u))$ with respect to a suitable monomial order. The form of the binomials in this Gr\"obner basis shows that 
$B_t(u)$ satisfies an $x$-condition which guarantees  that all the powers of $B_t(u)$ have linear quotients. Moreover, the Rees algebra 
$\MR(B_t(u))$ is a normal Cohen-Macaulay domain which implies in turn, by a result of \cite{HQ}, that $B_t(u)$ possesses  the strong 
persistence property as considered in \cite{HQ}. Consequently, $B_t(u)$ satisfies the persistence property, which means that 
$\Ass(B_t(u))\subset \Ass(B_t(u)^2)\subset \cdots \subset \Ass(B_t(u)^k)\subset \cdots.$ Note that the associated primes of $B_t(u)$ can be read from Theorem~\ref{ass}.

In Section~\ref{three}, we study the limit behavior of the depth for the powers of $t$-spread principal Borel ideals. In Theorem~\ref{limdepth} we show that if the generator $u=x_{i_1}\cdots x_{i_d}$ satisfies the condition $i_1\geq t+1,$ then 
$\lim_{k\to\infty}\depth(S/B_t(u)^k)=0.$ In particular, we determine the analytic spread of $B_t(u),$ that is, the Krull dimension of the fiber ring  
$\MR(B_t(u))/\mathfrak{m} \MR(B_t(u))$ where $\mathfrak{m}=(x_1,\ldots,x_n).$

\section{$t$-spread principal Borel ideals are sequentially Cohen-Macaulay}

Let $u=x_{i_1}x_{i_2}\cdots x_{i_d}\in S= K[x_1,\ldots, x_n]$ be a $t$-spread monomial. Suppose that $i_d=n$ and $I=B_t(u)$. For a monomial $u\in I$, we denote 
$\supp(u)=\{j\in [n]: x_j \mid u\}.$ Let $\Delta$ be the simplicial complex such that $I_{\Delta}=I$. We denote $I^{\vee}$ the Stanley-Reisner ideal of the Alexander dual of $\Delta$.

\begin{Theorem}\label{ass}
  Let $t\geq 1$ be an integer and $I=B_t(u)$ where $u=x_{i_1}x_{i_2}\cdots x_{i_d}$ is  a $t$-spread monomial. We assume that $\bigcup_{v\in G(I)}\supp(v)=[n].$ Then $I^{\vee}$ is generated by the monomials of the following forms
  \begin{equation}\label{form1}
 \prod_{k=1}^{n}x_k/(v_{j_1}\cdots v_{j_{d-1}})
\end{equation}
   with $j_l\leq i_l$ for $1\leq l\leq d-1$ and $j_l-j_{l-1}\geq t$ for $2\leq l\leq d-1$, where $v_{j_k}=x_{j_k}\cdots x_{j_k+(t-1)}\text{ for }1\leq k\leq d-1.$
\begin{equation}\label{form2}
\prod_{k=1}^{i_1}x_k.
\end{equation}
\begin{equation}\label{form3}
\prod_{k=1}^{i_s}x_k/(v_{j_1}\cdots v_{j_{s-1}})
\end{equation}
    with $2\leq s\leq d-1$, $j_l\leq i_l$ for $1\leq l\leq s-1$, $j_l-j_{l-1}\geq t$ for $2\leq l\leq s-1$, where 
		$v_{j_k}=x_{j_k}\cdots x_{j_k+(t-1)}\text{ for }1\leq k\leq s-1.$
\end{Theorem}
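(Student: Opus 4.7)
The plan is to characterize the facets of $\Delta$, since the minimal generators of $I^\vee$ are precisely the monomials $\prod_{k\notin F}x_k$ as $F$ runs through the facets of $\Delta$. Concretely, I will show that the candidate sets
\[
F=\bigcup_{l=1}^{d-1}[j_l,j_l+t-1]\qquad\text{and}\qquad F=[i_s+1,n]\cup\bigcup_{l=1}^{s-1}[j_l,j_l+t-1],
\]
namely the complements in $[n]$ of the monomials in (\ref{form1}) and in (\ref{form2})--(\ref{form3}), are exactly the facets.

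To verify each candidate $F$ is a nonface I would use pigeonhole. For form (\ref{form1}) a generator's support would place $d$ entries into $d-1$ intervals of length $t$, forcing two of them into the same interval and violating $t$-spreadness. For forms (\ref{form2}) and (\ref{form3}) the inequality $k_s\leq i_s$ confines the first $s$ entries of any generator to $[1,i_s]$, and inside this interval $F$ consists of only $s-1$ blocks of length $t$, again impossible. To verify maximality I would exhibit, for each $k\in[n]\setminus F$, a generator of $B_t(u)$ supported in $F\cup\{k\}$: pick one element from each block, adjusting the representatives of the two blocks adjacent to $k$ so that the gap $t$ from $k$ is achieved (this is always feasible since each block has length $t$); for forms (\ref{form2}) and (\ref{form3}) continue with a greedy $t$-spread sequence in the tail $[i_s+1,n]$. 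The bounds $j_l\leq i_l$ combined with $i_{l+1}-i_l\geq t$ ensure every coordinate respects the required upper bound.

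The main obstacle is the converse: every facet has one of the listed shapes. My approach is to run the greedy $t$-spread construction inside $F$, producing $h_1<\cdots<h_m$ with $h_1=\min F$ and $h_l=\min\{f\in F:f\geq h_{l-1}+t\}$. Since $F\in\Delta$, either (A) $h_l\leq i_l$ for all $l\leq m$ with $m<d$, or (B) there is a smallest index $s$ with $h_s>i_s$, and then $s\leq d-1$ because $i_d=n$. Maximality now forces the block structure: any $p\in[h_l,h_l+t-1]\setminus F$ would leave the greedy sequence unchanged (since $p$ is not a candidate for any pick), hence $F\cup\{p\}\in\Delta$, a contradiction; so each block is entirely in $F$. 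In Case (B), inserting any $p\in[i_s+1,n]\setminus F$ still leaves the $s$-th greedy entry strictly above $i_s$, so $[i_s+1,n]\subset F$, which together with the block-filling yields forms (\ref{form2}) or (\ref{form3}).

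In Case (A) one further needs $m=d-1$, and this is the delicate core of the argument: if $m<d-1$, then maximality separately rules out adding $p=1$ when $h_1>1$, adding $p=h_l+t$ in any gap $[h_l+t,h_{l+1}-1]$, and adding $p=h_m+t$ when $h_m+t\leq n$ (in each case the new greedy remains in Case (A) of length at most $m+1<d$). Ruling out all three extensions simultaneously forces $h_1=1$, $h_{l+1}=h_l+t$ for every $l$, and $h_m+t>n$, which collectively yield $F=[1,n]$, contradicting $F\in\Delta$. Hence $m=d-1$ and $F$ is of the form (\ref{form1}). Carefully tracking how the greedy sequence updates after each candidate insertion is the technical heart of the proof.
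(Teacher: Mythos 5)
Your overall architecture is the paper's: read the facets of $\Delta$ off the generators of $I^{\vee}$, check that the three candidate families are faces by the pigeonhole argument (note the slip: this shows they are \emph{faces}, not nonfaces), and prove maximality by exhibiting, for each $k\notin F$, a generator of $B_t(u)$ inside $F\cup\{k\}$. In that maximality step your phrase ``adjusting the representatives of the two blocks adjacent to $k$'' is not quite enough as stated: once the block just to the right of $k$ is pushed toward its right endpoint, \emph{all} later blocks must be pushed right as well (or chosen by a cascading greedy rule), since consecutive blocks may satisfy $j_{l+1}=j_l+t$; the paper's explicit choice (left endpoints before $k$, right endpoints after, using $j_l+(t-1)\le i_l+(t-1)<i_{l+1}$) does exactly this. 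Where you genuinely diverge is the converse. The paper avoids arguing on a facet at all: it shows every \emph{face} $G\in\Delta$ is contained in one of the listed sets, by padding the greedy sequence of $G$ with blocks at $i_{r+1},\dots,i_{d-1}$ or with the tail $[i_s+1,n]$; since the listed sets are already known to be facets, the classification follows with no extra machinery. You instead force the shape of a facet $F$ from maximality plus the greedy sequence. That route works, but it silently relies on a lemma you must state and prove: the greedy ($t$-spread, leftmost) sequence of a set is componentwise minimal among all $t$-spread sequences in that set and has maximal length, so that ``the greedy sequence of $F\cup\{p\}$ is unchanged, or still has its first violation $h_s>i_s$ at step $s$'' really does imply that no generator divides $x_{F\cup\{p\}}$, i.e.\ $F\cup\{p\}\in\Delta$; as written, that inference is asserted, not justified. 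Finally, your ``delicate core'' in Case (A) can be collapsed: after block-filling, $F$ is a union of $m$ blocks of length $t$, so any $t$-spread sequence in $F\cup\{p\}$ meets each block at most once and has at most $m+1$ terms; if $m<d-1$ this is $<d$, so $F\cup\{p\}\in\Delta$ for \emph{every} $p\notin F$ (and such $p$ exists since $[n]\notin\Delta$), contradicting maximality at once---no need to track how the greedy sequence updates under the three kinds of insertion. So: correct in outline, same skeleton as the paper for the face and maximality parts, a different and somewhat heavier treatment of the converse whose missing ingredient is the explicit leftmost-greedy lemma.
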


\begin{proof}
  Let $\Delta$ be the simplicial complex whose Stanley-Reisner ideal is $I$ and let $\mathcal{F}(\Delta)$ be the set of the facets of $\Delta$. We prove that every facet of $\Delta$ is  of one of the following forms:
  \begin{enumerate}\label{facets}
   \item[(i)] $F_1=\{j_1, j_1+1, \ldots, j_1+(t-1), j_2, j_2+1, \ldots, j_2+(t-1),\ldots, j_{d-1}, j_{d-1}+1, \ldots, j_{d-1}+(t-1)\}$, for some $j_1, j_2, \ldots, j_{d-1}$ such that $j_l\leq i_l$ for $1\leq l\leq d-1$ and $j_l-j_{l-1}\geq t$ for $2\leq l\leq d-1$.
   \item[(ii)] $F_2=\{i_1+1, i_1+2,\ldots, n\}$.
   \item[(iii)] $F_3=\{j_1, j_1+1, \ldots, j_1+(t-1),\ldots, j_{s-1}, j_{s-1}+1, \ldots, j_{s-1}+(t-1), j_s, j_s+1, \ldots, n\}$, for some $j_1, j_2, \ldots, j_s$ such that $2\leq s\leq d-1$, $j_l\leq i_l$ for $1\leq l\leq s-1$, $j_s=i_s+1$ and $j_l-j_{l-1}\geq t$ for $2\leq l\leq s$.
  \end{enumerate}
  
Since $I=B_t(u)$ has the primary decomposition \[I=\bigcap_{F\in\mathcal{F}(\Delta)}P_{[n]\setminus F},\]
  where $P_{[n]\setminus F}$ is the  prime ideal generated by all variables $x_j$ with $j\in[n]\setminus F$, by 
	\cite[Corollary 1.5.5]{HHBook}, the statement holds.

  Since $(x_1,\ldots x_{i_1})\in \Min(I)$ by \cite[Theorem 2.4]{EHQ}, we obtain $F_2\in\mathcal{F}(\Delta)$.
  
  We have $F_1$ and $F_3\in \Delta$, since $x_{F_1}=\prod_{i\in F_1}x_i$ and $x_{F_3}=\prod_{i\in F_3}x_i$ do not belong to $I$. Indeed, if $x_{F_1}\in I$, then there exists $v=x_{k_1}\cdots x_{k_d}\in G(I)$ such that $v\mid x_{F_1}$. Since $v$ is the product of $d$ distinct variables and $F_1$ consists of $d-1$ intervals of the form $[j_r, j_r+(t-1)],$ $1\leq r\leq d-1$, there exist $l\in \{2,\ldots, d\}$ and $1\leq r\leq d-1$ such that $k_{l-1}, k_l\in [j_r,j_r+(t-1)]$. It follows that $k_l-k_{l-1}<t$, which is false.

  If $x_{F_3}\in I$, then there exists $v=x_{k_1}\cdots x_{k_d}\in G(I)$ such that $v\mid x_{F_3}$. Since $k_l-k_{l-1}\geq t$ for $2\leq l\leq s$ and $F_3$ consists of $s-1$ intervals of the form $[j_r, j_r+(t-1)],$ $1\leq r\leq s-1,$ and only one interval of the form $[j_s,n]$, we have $k_s\in [j_s,n]$.  It follows that $j_s=i_s+1\leq k_s$, which is false because $v\in G(I)$ and $k_s\leq i_s$.

  We claim that $F_i\cup \{j\}\notin \Delta$ for every $j\in [n]\setminus F_i$ and $i\in\{1,3\}$. This will prove that every set of the form (i) or (iii) is a facet of $\Delta.$

  Let $j\in[n]\setminus F_1$. We consider the following cases:
  \begin{enumerate}
    \item[(a)]  $j<j_1.$ Then $x_j x_{j_1+(t-1)}\cdots x_{j_{d-1}+(t-1)}\in I$ because $j<j_1\leq i_1$ and $j_l+(t-1)\leq i_l+(t-1)<i_{l+1}$ for $1\leq l\leq d-1$.
    \item[(b)]  $j\geq j_{d-1}+t.$ Then $x_{j_1} x_{j_2}\cdots x_{j_{d-1}}x_j\in I$.
    \item[(c)] There exists $1\leq l\leq d-2$ such that $j_l+(t-1)<j<j_{l+1}.$ Then \[x_{j_1} x_{j_2}\cdots x_{j_l} x_j x_{j_{l+1}+(t-1)}\cdots  x_{j_{d-1}+(t-1)}\in I,\] since $j_k\leq i_k$ for $1\leq k\leq l$, $j<j_{l+1}\leq i_{l+1}$ and $j_k+(t-1)\leq i_k+(t-1)<i_{k+1}$ for $l+1\leq k\leq d-1$.
  \end{enumerate}
	Therefore, we have proved that for $j\in[n]\setminus F_1$, $F_1\cup\{j\}\not\in \Delta$ which implies that $F_1$ is a facet in $\Delta.$

  Let $j\in[n]\setminus F_3$. We show that $x_j x_{F_3}\in I$, thus $F_3\cup\{j\}\not\in \Delta.$
  \begin{enumerate}
    \item[(a)] If $j<j_1$, then $x_j x_{j_1+(t-1)}\cdots x_{j_{s-1}+(t-1)}x_{j_s+(t-1)}x_{j_s+(2t-1)}\cdots x_{j_s+(d-s)t-1}\in I$ because $j<j_1\leq i_1$, $j_k+(t-1)\leq i_k+(t-1)<i_{k+1}$ for $2\leq k\leq s-1$ and $j_s+(kt-1)=i_s+1+(kt-1)=i_s+kt\leq i_{s+k}$ for $1\leq k\leq d-s$.
    \item[(b)] If there exists $2\leq l\leq s$ such that $j_{l-1}+(t-1)<j<j_l$, then \[x_{j_1} x_{j_2}\cdots x_{j_{l-1}} x_j x_{j_l+(t-1)}\cdots  x_{j_s+(t-1)}\cdots x_{j_s+(d-s)t-1}\in I.\]
        Indeed, $j_k\leq i_k$ for $1\leq k\leq l$. If $l=s$, then $j<j_s=i_s+1$ and $j_s+(kt-1)=i_s+1+(kt-1)=i_s+kt\leq i_{s+k}$ for 
				$1\leq k\leq d-s$. In the case that $l<s$, we have $j<j_l\leq i_l$, $j_k+(t-1)\leq i_k+(t-1)<i_{k+1}$ for $l\leq k\leq s-1$ and $j_s+(kt-1)=i_s+1+(kt-1)\leq i_{s+k}$ for $1\leq k\leq d-s$.
  \end{enumerate}

  It remains to show that the sets of the forms (i)--(iii) are the only facets of $\Delta$. This is equivalent to showing that for every face 
	$G\in \Delta$, there exists $F\in \mathcal{F}(\Delta)$ of one of the forms (i)--(iii)  which contains $G$.

  Let $G\in \Delta$ and $j_1=\min\{j:j\in G\}$. Inductively, for $l\geq 2$, we set \[j_l=\min\{j:j\in G\text{ and }j\geq j_{l-1}+t\}.\]

  In the case that $j_l\leq i_l$ for all $l$, then the sequence $j_1<j_2<\ldots<j_l<\ldots$ has at most $d-1$ elements. Otherwise, $x_{j_1}\cdots x_{j_d}\in I$, which implies that $G\notin\Delta$, a contradiction. Let $j_1<j_2<\ldots<j_r$ with $r\leq d-1$. Then $G\subset F_1=\{j_1,\ldots, j_1+(t-1),\ldots, j_r,\ldots, j_r+(t-1), i_{r+1},\ldots, i_{r+1}+(t-1),\ldots, i_{d-1},\ldots, i_{d-1}+(t-1)\}\in \mathcal{F}(\Delta)$.

  If there exists $l\leq d$ such that $j_l>i_l$, then we denote by $s$ the smallest index with this property. In the case that $s=1$, $j_1=\min\{j:j\in G\}$ and $G\subset F_2=\{i_1+1, i_1+2,\ldots, n\}\in \mathcal{F}(\Delta)$. If $s>1$, then $G\subset F_3=\{j_1, j_1+1, \ldots, j_1+(t-1),\ldots, j_{s-1}, j_{s-1}+1, \ldots, j_{s-1}+(t-1), i_s+1, i_s+2, \ldots, n\}\in \mathcal{F}(\Delta)$.
\end{proof}

\begin{Theorem}\label{SCM}
Let $t\geq 1$ be an integer, $u=x_{i_1}x_{i_2}\cdots x_{i_d}\in S$  a $t$-spread monomial, and  suppose that $i_d=n.$ Then
the $t$-spread principal Borel ideal  $I=B_t(u)$ is sequentially Cohen-Macaulay.
\end{Theorem}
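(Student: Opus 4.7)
The plan is to invoke Alexander duality. By a theorem of Herzog and Hibi (see \cite{HHBook}), a squarefree monomial ideal $I$ is sequentially Cohen-Macaulay if and only if its Alexander dual $I^{\vee}$ is componentwise linear, and any monomial ideal with linear quotients is componentwise linear. Hence it suffices to prove that $I^{\vee}$ has linear quotients with respect to some ordering of its minimal generators.

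After the harmless reduction to the case $\bigcup_{v\in G(I)}\supp(v)=[n]$---restricting to the subring in the variables that actually appear does not affect sequential Cohen-Macaulayness---Theorem~\ref{ass} furnishes a complete list of the minimal generators of $I^{\vee}$, organized in the three families \eqref{form1}, \eqref{form2}, \eqref{form3}.

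I would order these generators by increasing step index $s$ (treating \eqref{form2} as $s=1$, \eqref{form3} as $2\le s\le d-1$, and \eqref{form1} as $s=d$) and, within each $s$, by the pure lexicographic order on the tuples $(j_1,\ldots,j_{s-1})$. Note that this coincides with ordering by total degree, since the degrees $i_s-(s-1)t$ are non-decreasing in $s$ thanks to the $t$-spread condition $i_{s+1}\ge i_s+t$. The verification then amounts to showing that for every pair $w_j,w_k$ with $j<k$, there exists an earlier generator $w_l$ and a variable $x_r$ with $w_l/\gcd(w_l,w_k)=x_r$ and $x_r\mid w_j/\gcd(w_j,w_k)$. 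Such a $w_l$ is produced by altering a single tuple entry of $w_k$ (decreasing some $j_\ell$ by the minimal allowed amount, or passing to a neighbouring family with adjacent $s$), so that $w_l$ and $w_k$ differ by exactly one variable.

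The main obstacle will be the combinatorial bookkeeping: one must handle the interfaces between the three families and the lex-order boundaries within a single family (where a tuple entry already sits at its maximum value $i_\ell$, so the "obvious" predecessor is not itself a minimal generator and an element from the next family must be substituted). Once linear quotients are in hand, $I^{\vee}$ is componentwise linear, and by Alexander duality $B_t(u)$ is sequentially Cohen-Macaulay.
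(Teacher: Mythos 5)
Your overall strategy is exactly the paper's: reduce sequential Cohen--Macaulayness to componentwise linearity of $I^{\vee}$ via Alexander duality, use Theorem~\ref{ass} for the explicit list of generators of $I^{\vee}$, and then exhibit linear quotients. The reduction to $\bigcup_{v\in G(I)}\supp(v)=[n]$ is correctly flagged as harmless.

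Where you diverge is the \emph{choice of order}, and this is not a cosmetic difference. The paper puts the form \eqref{form2} generator first, then \emph{all} form \eqref{form3} generators in decreasing pure lex order on the monomials themselves, then all form \eqref{form1} generators in decreasing pure lex. That order \emph{interleaves} the different values of $s$: e.g.\ for $B_2(x_2x_4x_6x_9)$ the form \eqref{form3} generators are listed $x_1x_4,\ x_1x_6,\ x_3x_4,\ x_3x_6,\ x_5x_6$, with $s$-values $2,3,2,3,3$. Your proposal, by contrast, blocks the generators by $s$ and only afterwards orders within each block, giving $x_1x_4,\ x_3x_4,\ x_1x_6,\ x_3x_6,\ x_5x_6$. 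These are genuinely different total orders, and linear quotients is an order-sensitive property, so you cannot appeal to the paper's verification; the bookkeeping you defer is a different computation, not the same one.

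Two concrete issues in the sketch itself. First, ``within each $s$, by the pure lexicographic order on the tuples'' is ambiguous, and only one direction can work: with increasing lex on tuples you would list $x_3x_4$ before $x_1x_4$ in $B_2(x_2x_4x_9)$, and then $(x_1x_2):x_3x_4=(x_1x_2)$ is not generated by variables, so linear quotients fails at the second step already. You must order so that larger tuples $(j_1,\dots,j_{s-1})$ come first. Second, given that choice, your recipe ``decreasing some $j_\ell$ by the minimal allowed amount'' produces a \emph{later} generator, not an earlier one; the correct move (and the one the paper makes in its cases (a), (b)) is to \emph{increase} a suitable $j_\ell$, possibly cascading increases along a run where consecutive $j_\ell$'s are spaced exactly $t$ apart, and your ``pass to a neighbouring family'' should mean passing to \emph{smaller} $s$ (or to form~\eqref{form2}) when the $j_\ell$'s are already maximal. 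As it stands, the verification---which is the entire content of the theorem once Theorem~\ref{ass} is in hand---is not carried out, and the outline points in the wrong direction at the crucial step.
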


\begin{proof}
By \cite[Theorem 8.2.20]{HHBook}, it is enough to show that $I^{\vee}$, that is, the Stanley-Reisner ideal of $\Delta^{\vee}$, is componentwise linear. By \cite[Corollary 8.2.21]{HHBook}, it suffices to prove that $I^{\vee}$ has linear quotients.  

Let $w_1=\prod_{k=1}^{i_1}x_k$. Let $w_2,w_3,\ldots, w_q$ be the minimal monomial generators of $I^{\vee}$  of the form (\ref{form3}), ordered decreasingly with respect to the  pure lexicographic order. Let $w_{q+1},w_{q+2},\ldots, w_r$ be the remaining minimal monomial generators of $I^{\vee}$, namely those of form (\ref{form1}), ordered decreasingly with respect to the pure lexicographic order.
  
We show that $I^{\vee}$ has linear quotients with respect to the above order of its minimal generators. This is equivalent to proving the following claim:
\begin{eqnarray}\label{eqn}
 \text{ for every }1<j\leq r \text{ and }g<j, \text{ there exists }w>_{\lex}w_j \text{ and  } \\ \nonumber
1\leq l\leq n \text{  such that }x_l=\frac{w}{\gcd(w, w_j)} \text{ and }x_l\mid \frac{w_g}{\gcd(w_g, w_j)}.
\end{eqnarray}

Since $w_2$ is the largest generator of the form (\ref{form3}) with respect to the pure lexicographic order, we have $x_1x_2\cdots x_{i_1-1}\mid w_2$. Thus, we obtain $x_{i_1}=w_1/\gcd(w_1, w_2)$ and claim (\ref{eqn}) is proved for $g=1<j=2.$
  
Let $2<j\leq q$, $g=1$ and $w_j=\prod_{k=1}^{i_s}x_k/v_{j_1}v_{j_2}\cdots v_{j_{s-1}}$ with $s\leq d-1$. 
If $j_1=i_1$, then we have  $x_{i_1}={w_1}/{\gcd(w_1, w_j)}.$  Let $j_1<i_1$. Then $x_{j_1}\mid w_1/\gcd(w_1, w_j)$. We have to find $w>_{\lex}w_j$ such that $x_{j_1}=w/\gcd(w, w_j)$.
  
	\begin{enumerate}
    \item[(a)] If  there exists a least integer $1\leq l\leq s-2$ such that $j_{l+1}>j_l+t$, then we take $w=\prod_{k=1}^{i_s}x_k/v_{j_1+1}\cdots v_{j_l+1}v_{j_{l+1}}\cdots v_{j_{s-1}}$. 
		\item[(b)] If  $j_{l+1}=j_l+t$ for $1\leq l\leq s-2$, then \[w=\prod_{k=1}^{i_s}x_k/v_{j_1+1}\cdots v_{j_l+1}v_{j_{l+1}+1}\cdots v_{j_{s-1}+1}.\]
  \end{enumerate}
  
  Let $1<g<j$. Then $w_g>_{\lex}w_j$. Let \[w_g=\frac{\prod_{k=1}^{i_{s'}}x_k}{v_{g_1}v_{g_2}\cdots v_{g_{s'-1}}}\] with $s'\leq d-1$. Thus, we obtain $v_{g_1}v_{g_2}\cdots v_{g_{s'-1}}<_{\lex}v_{j_1}v_{j_2}\cdots v_{j_{s-1}}$. It implies that there exists $h\geq 1$ such that $j_1=g_1$, $j_2=g_2$,\ldots, $j_{h-1}=g_{h-1}$ and $j_h<g_h\leq i_h$. Since $x_{j_h}\mid v_{j_1}v_{j_2}\cdots v_{j_{s-1}}$ and $x_{j_h}\nmid v_{g_1}v_{g_2}\cdots v_{g_{s'-1}}$, we get $x_{j_h}\mid {w_g}/{\gcd(w_g,w_j)}.$ Then we must find a generator $w$ of 
	$I^{\vee}$ with $w>_{\lex}w_j$ such that $x_{j_h}=w/\gcd(w,w_j).$
\begin{enumerate}	
\item[(a)] If there exists a least integer $h\leq l\leq s-2$ such $j_{l+1}>j_l+t$, then
  \[w=\frac{\prod_{k=1}^{i_s}x_k}{v_{j_1}\cdots v_{j_{h-1}}v_{j_h+1}v_{j_{h+1}+1}\cdots v_{j_l+1}v_{j_{l+1}}\cdots v_{j_{s-1}}}.\]
\item[(b)] If  $j_{l+1}=j_l+t$ for $h\leq l\leq s-2$, then $w={\prod_{k=1}^{i_s}x_k}/{v_{j_1}\cdots v_{j_{h-1}}v_{j_h+1}\cdots v_{j_{s-1}+1}}.$
\end{enumerate} 
  With  similar arguments, one proves claim (\ref{eqn}) for $q+1\leq j\leq r$.
\end{proof}

\begin{Example}
Let $I=B_2(x_2x_4x_9)\subset K[x_1,\ldots,x_9].$ Then  \[I^{\vee}=(x_1x_2, x_1x_4, x_3x_4, x_1x_6x_7x_8x_9, x_3x_6x_7x_8x_9, x_5x_6x_7x_8x_9),\] where we ordered the generators as indicated in the proof of the above theorem.
\end{Example}

\section{The Rees algebra of $t$-spread Borel principal ideals}
\label{two}

In this section we consider the 
Rees algebra of  $t$-spread Borel principal ideals. 

For two monomials $v,w\in S$ of degree $d$, we write $vw=x_{i_1}x_{i_2}\cdots x_{i_{2d}}$ with $i_1\leq i_2 \leq \cdots \leq i_{2d}.$
Then the \emph{sorting} of the pair $(v,w)$ is the pair of monomials $(v',w')$ where $v'=x_{i_1}x_{i_3}\cdots x_{i_{2d-1}}$ and $w'=x_{i_2}x_{i_4}\cdots x_{i_{2d}}.$
The map $\sort:S_d \times S_d \to S_d \times S_d$
with $\sort(v,w)=(v',w')$ is called the \emph{sorting operator}. A subset $B \subset S_{d}$ is called  \emph{sortable} if $\sort(B \times B) \subset B\times B$. An $r$-tuple $(u_1,\ldots,u_r)$
 of monomials  of degree $d$ is sorted if for all $1\leq i<j\leq r,$ the pair $(u_i,u_j)$ is sorted, that is, 
$\sort(u_i,u_j)=(u_i,u_j).$ This means that if we write $u_1=x_{i_1}\cdots x_{i_d}, u_2=x_{j_1}\cdots x_{j_d},\ldots, u_r=x_{k_1}\cdots x_{k_d},$ then $(u_1,\ldots,u_r)$ is sorted if and only if $i_1\leq j_1\leq \cdots \leq k_1\leq i_2\leq j_2\leq \cdots \leq k_2\leq \cdots \leq i_d\leq j_d\leq \cdots\leq k_d.$ By \cite[Theorem 6.12]{EHBook}, for every $r$-tuple $(u_1,\ldots,u_r)$
 of monomials of degree $d$, there exists a unique sorted $r$-tuple $(u_1^\prime,\ldots,u_r^\prime)$ such that 
$u_1\cdots u_r=u_1^\prime\cdots u_r^\prime.$ When $(u_1^\prime,\ldots,u_r^\prime)=(u_1,\ldots,u_r),$ we say that the product $u_1\cdots u_r$ is sorted.

Let $t\geq 1$ be an integer and $I=B_t(u)$ where $u=x_{i_1}\cdots x_{i_d}$ is a $t$-spread monomial.
It was shown in \cite[Proposition 3.1]{EHQ} that the minimal set of monomial generators of a $t$-spread principal Borel ideal is sortable.
Therefore, if $v,w \in G(I)$, and $\sort(v,w)=(v',w')$, then $v',w' \in G(I)$.

Let $\MR(I)= \bigoplus_{j\geq 0} {I^j}{t^j}$ be the Rees algebra of the ideal $I=B_t(u).$ 
Since the minimal generators of $B_t(u)$ have the same degree, the fiber 
$\MR(I)/\mathfrak{m}\MR(I)$ of the Rees ring $\MR(I)$ is isomorphic to $K[G(I)].$ Let $\psi:T=K[\{t_v: v\in G(I)\}]\to K[G(I)]$ be the homomorphism  given by 
$t_{v}\mapsto v$ for $v\in G(I).$ 
 As it was proved in  \cite[Theorem 3.2]{EHQ}, the set of binomials 
\[\MG=\{t_vt_w-t_{v^\prime}t_{w^\prime}: (v,w) \text{ unsorted }, (v^\prime, w^\prime)=\sort(v,w)\}\]
 is a
Gr\"obner basis of the toric ideal $J_u=\ker \psi$ with respect to the sorting order on $T$. For more details about  sorting order we refer to 
\cite[Chapter 14]{St95} or \cite[Section 6.2]{EHBook}. The initial monomial of the binomial $t_vt_w-t_{v^\prime}t_{w^\prime}\in \MG$ with respect to the sorting order is $t_vt_w.$

We now  recall the definition of \emph{$\ell$-exchange property} from \cite{HHV05}; see also \cite[Section 6.4]{EHBook} and
\cite[Section 3]{EO}. 

Let $I\subset S$ be a monomial ideal generated in a single degree and $K[\{t_u: u\in G(I)\}]$ the polynomial ring in $|G(I)|$ 
variables endowed with a monomial order $<.$ Let $P$ be the kernel of the $K$-algebra homomorphism 
 \[ K[\{t_u: u\in G(I)\}]\to K[G(I)], t_u\mapsto u, u\in G(I).\]

A monomial $t_{u_1}\cdots t_{u_N}$ is called \emph{standard with respect to $<$} if it does not belong to $\ini_<(P).$

\begin{Definition}\cite{HHV05}\label{defellexchange}
The monomial ideal $I\subset S$ satisfies the $\ell$-exchange property with respect to $<$ if the following condition holds: let 
 $t_{u_1}\cdots t_{u_N},t_{v_1}\cdots t_{v_N}$ be two standard monomials  with respect to $<$  of the same degree $N$
%with $v=\psi(\yb^{\ab})$ and $w=\psi(\yb^{\bb})$ 
satisfying 
\begin{itemize}
	\item [(i)] $\deg_{x_i} u_1\cdots u_N=\deg_{x_i}v_1\cdots v_N$ for $1\leq i\leq q-1$ with $q\leq n-1,$
	\item [(ii)] $\deg_{x_q}u_1\cdots u_N<\deg_{x_q}v_1\cdots v_N.$
\end{itemize}
Then there exists  integers $\delta, j$ with $q<j\leq n$ and $j\in \supp(u_\delta)$ such that $x_q u_\delta/x_j\in I.$
\end{Definition}

\begin{Proposition}\label{elexchange}
Let $u=x_{i_1}\cdots x_{i_d}$ be a $t$-spread monomial in $S.$ Then the $t$-spread principal Borel ideal $B_t(u)$ satisfies the 
$\ell$-exchange property with respect to the sorting order $<_{sort}.$
\end{Proposition}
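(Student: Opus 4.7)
The plan exploits the explicit Gr\"obner basis $\MG$ of $J_u$ recalled just before the statement: since the initial monomials of $\MG$ with respect to $<_{\sort}$ are exactly the unsorted pairs $t_vt_w$, a product $t_{u_1}\cdots t_{u_N}$ is standard if and only if the tuple $(u_1,\ldots,u_N)$ is sorted. I will encode each sorted tuple by the sorted concatenation of its indices: let $c_1\leq c_2\leq\cdots\leq c_{Nd}$ list, with multiplicity, the indices of the variables in $u_1\cdots u_N$, so that $u_\alpha=\prod_{r=1}^d x_{c_{(r-1)N+\alpha}}$ for $1\leq\alpha\leq N$; define $c'_k$ analogously for $v_1\cdots v_N$.

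Now set $p:=\sum_{i<q}\deg_{x_i}(u_1\cdots u_N)$, $s:=\deg_{x_q}(u_1\cdots u_N)$, and $s':=\deg_{x_q}(v_1\cdots v_N)$. Hypothesis (i) forces the multisets $\{c_k:c_k<q\}$ and $\{c'_k:c'_k<q\}$ to coincide, and since both sequences are non-decreasing this yields $c_k=c'_k$ for all $k\leq p$. Hypothesis (ii) is exactly $s<s'$; squarefreeness of each generator of $B_t(u)$ (valid for $t\geq 1$) gives $s,s'\leq N$, whence $s\leq N-1$.

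The candidate indices come from $k^*:=p+s+1$. By construction $c_{k^*}>q$ while $c'_{k^*}=q$. Writing $k^*=(\ell-1)N+\delta$ with $1\leq\delta\leq N$, the value $a_\ell:=c_{k^*}$ is the $\ell$-th variable of $u_\delta$ and $b_\ell:=c'_{k^*}=q$ is the $\ell$-th variable of $v_\delta$. I choose $j:=a_\ell$; then $j>q$ and $j\in\supp(u_\delta)$.

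The main task is to verify $x_qu_\delta/x_j\in B_t(u)$, i.e., that the monomial $w:=x_qu_\delta/x_{a_\ell}$ lies in $G(B_t(u))$. The index bounds from the description of $G(B_t(u))$ are immediate because $q<a_\ell\leq i_\ell$ and the other entries are unchanged; the nontrivial $t$-spread gap is $q-a_{\ell-1}\geq t$ when $\ell\geq 2$. Here the bound $s\leq N-1$ gives $1\leq k^*-N\leq p$, so $a_{\ell-1}=c_{k^*-N}=c'_{k^*-N}=b_{\ell-1}$, and then $t$-spreadness of $v_\delta$ forces $b_{\ell-1}\leq b_\ell-t=q-t$. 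The remaining gap $a_{\ell+1}-q\geq t+1$ follows from $a_{\ell+1}\geq a_\ell+t>q+t$. The delicate point is the bridging identity $a_{\ell-1}=b_{\ell-1}$ obtained from the coincidence $c_k=c'_k$ on positions $k\leq p$; this is what makes squarefreeness (giving $s<N$) essential and ultimately drives the $\ell$-exchange property.
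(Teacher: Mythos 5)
Your proof is correct, and at its core it follows the same strategy as the paper: use that both standard monomials encode sorted tuples, deduce from hypothesis (i) that the two products agree variable-by-variable in $x_1,\dots,x_{q-1}$, locate an index $\delta$ at which $q$ occurs in $v_\delta$ but not in $u_\delta$, and replace the variable of $u_\delta$ that sits in the slot occupied by $q$ in $v_\delta$. Your bound $q-a_{\ell-1}\geq t$ via $a_{\ell-1}=b_{\ell-1}\leq b_\ell-t$ and your bound $a_{\ell+1}-q>t$ via $a_{\ell+1}\geq a_\ell+t>q+t$ are exactly the paper's inequalities $q=\ell_\mu\geq\ell_{\mu-1}+t=j_{\mu-1}+t$ and $j_{\mu+1}\geq j_\mu+t>q+t$, in your notation.

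Where you differ is in bookkeeping, and your version is tidier. The paper chooses $\delta$ non-constructively with $\deg_{x_q}(u_\delta)<\deg_{x_q}(v_\delta)$, writes $u_\delta=x_{j_1}\cdots x_{j_d}$, $v_\delta=x_{\ell_1}\cdots x_{\ell_d}$, $q=\ell_\mu$, and then splits into two cases depending on whether $j_\mu\in\supp(v_\delta)$, selecting $j=j_\mu$ in the first case and some $j\in\supp(u_\delta)\setminus\supp(v_\delta)$ with $j>q$ in the second. Your sorted-concatenation encoding instead fixes a single distinguished position $k^*=p+s+1$, which determines $\delta$, $\ell$, and $j=a_\ell=c_{k^*}$ all at once; there is no case split. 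In fact a close reading of the paper shows the split is superfluous: the choice $j=j_\mu$ works regardless of whether $j_\mu\in\supp(v_\delta)$, because the two gap inequalities and the Borel bound $q<j_\mu\leq i_\mu$ never use that hypothesis. So your argument is both correct and a mild streamlining. Two small points worth making explicit: (1) the Borel bounds $a_r\leq i_r$ for $r\neq\ell$ and $q<a_\ell\leq i_\ell$ deserve one sentence, since membership in $B_t(u)$ requires both the $t$-spread gaps and the bounds $j_k\leq i_k$, and the paper itself glosses the latter; (2) you should point out that the chain $a_{\ell-1}<q<a_\ell$ together with strict monotonicity of $(a_r)$ shows $q\notin\supp(u_\delta)$, which is what guarantees $x_qu_\delta/x_{a_\ell}$ is squarefree.
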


\begin{proof}
Let $T=K[\{t_v : v\in G(B_t(u))\}]$ and $t_{u_1}\cdots t_{u_N},t_{v_1}\cdots t_{v_N}\in T$ be two standard monomials  with respect to $<_{\sort}$  of the same degree $N$
such that the two conditions of Definition~\ref{defellexchange} are fulfilled. As the chosen monomials are standard with respect to $<_{\sort}$, it follows that the products 
$ u_1\cdots u_N, v_1\cdots v_N$ are sorted. Since $\deg_{x_i} u_1\cdots u_N=\deg_{x_i}v_1\cdots v_N$ for $1\leq i\leq q-1$, we have $\deg_{x_i}(u_\nu)=\deg_{x_i}(v_\nu)$
for all $1\leq \nu\leq N$ and $1\leq i\leq q-1$. The condition $\deg_{x_q}u_1\cdots u_N<\deg_{x_q}v_1\cdots v_N$ implies that there exists 
$1\leq \delta\leq N$ such that $\deg_{x_q}(u_\delta)<\deg_{x_q}(v_\delta).$

Let $u_{\delta}=x_{j_1}\cdots x_{j_d}$, $v_{\delta}=x_{\ell_1}\cdots x_{\ell_d}$, and assume that $q=\ell_\mu$ for some $1\leq \mu <d.$ It follows that 
$j_1=\ell_1,\ldots,j_{\mu-1}=\ell_{\mu-1}$ and $j_\mu>\ell_\mu=q.$ If $j_\mu\not\in \supp(v_{\delta}),$ then we take $j=j_\mu.$ Then the monomial 
$x_qu_{\delta}/x_j$ is $t$-spread, thus it belongs to $B_t(u)$, since $q=\ell_\mu\geq \ell_{\mu-1}+t=j_{\mu-1}+t$ and $j_{\mu+1}\geq j_\mu+t>q+t.$

If $j_\mu\in \supp(v_{\delta}), $ we may choose any $j\in \supp(u_\delta)\setminus \supp(v_{\delta})$ with $j>q.$ We should note that such an integer $j$ exists since
$\deg (u_\delta)=\deg (v_\delta)$ and $\deg_{x_q}(u_\delta)<\deg_{x_q}(v_\delta).$ Then $x_qu_{\delta}/x_j$ is $t$-spread since $q,j_\mu\in\supp(v_{\delta}).$
\end{proof}

The Rees ring $\MR(I)$ where $I=B_t(u)$ has the presentation
\[
\varphi: R=S[\{t_v : v\in G(I)\}]\to \MR(I),
\]
given by
\[
x_i\mapsto x_i, 1\leq i\leq n, t_v\mapsto v t, v\in G(I).
\]
Let $J=\ker \varphi$ be the toric ideal of $\MR(I).$ We consider the sorting order $<_{\sort}$ on the ring $T=K[\{t_v : v\in G(I)\}]$ and the lexicographic order $<_{\lex}$ on the ring $S.$
Let $<$ be the monomial order on $ R=S[\{t_v : v\in G(I)\}]$ defined as follows: if $m_1, m_2$ are monomials in $S$ and $v_1,v_2$ are monomials in $T,$ then 
\[
m_1v_1>m_2v_2 \text{ if } m_1>_{\lex} m_2 \text{ or } m_1=m_2 \text{ and } v_1>_{\sort}v_2.
\]

\begin{Theorem}\label{GBRees} Let $I=B_t(u)$ be a $t$-spread principal Borel ideal.
The reduced Gr\"obner basis of the toric ideal $J$ with respect to $<$ consists of the set of binomials $t_vt_w-t_{v^\prime}t_{w^\prime}$ where $(v,w)$ is  unsorted and 
 $(v^\prime, w^\prime)=\sort(v,w),$ together with
the binomials of the form $x_i t_v-x_j t_w$ where $i<j,$ $x_iv=x_jw$, and $j$ is the largest integer for which $x_iv/x_j\in G(I).$
\end{Theorem}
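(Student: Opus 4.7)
The plan is to apply the standard correspondence between the $\ell$-exchange property and the Gröbner basis of the defining ideal of a Rees algebra, as developed in \cite{HHV05} and \cite[Chapter 6]{EHBook}, with Proposition~\ref{elexchange} as the crucial input. I would denote the proposed set of binomials by $\mathcal{H} = \mathcal{G}_1 \cup \mathcal{G}_2$, where $\mathcal{G}_1$ collects the sorting relations $t_vt_w - t_{v'}t_{w'}$ and $\mathcal{G}_2$ the binomials $x_it_v - x_jt_w$; all of them clearly lie in $J$. Their initial monomials under $<$ are $t_vt_w$ (by \cite[Theorem 3.2]{EHQ}) and $x_it_v$ respectively, the latter because $i<j$ forces $x_i >_{\lex} x_j$. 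Consequently a monomial $\xb^\alpha t_{u_1}\cdots t_{u_N}$ is standard (outside $\ini_<(\mathcal{H})$) if and only if (a) $(u_1,\ldots,u_N)$ is a sorted tuple, and (b) for every $i$ with $\alpha_i \geq 1$ and every $\delta$, there is no $j>i$ with $x_j\mid u_\delta$ and $x_iu_\delta/x_j \in G(I)$.

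To prove $\mathcal{H}$ is a Gröbner basis I would show that two standard monomials with the same image under $\varphi$ must coincide. Suppose $\xb^\alpha t_{u_1}\cdots t_{u_N}$ and $\xb^\beta t_{v_1}\cdots t_{v_N}$ are standard and satisfy $\xb^\alpha u_1\cdots u_N = \xb^\beta v_1\cdots v_N$ in $S$. If $\alpha=\beta$, the sortability of $G(I)$ (\cite[Proposition 3.1]{EHQ}) together with condition (a) identifies the two tuples. The main obstacle is the case $\alpha \neq \beta$: let $q$ be the smallest index where $\alpha$ and $\beta$ differ, say with $\alpha_q > \beta_q$, so $\alpha_q \geq 1$. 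Matching degrees in $S$ forces $\deg_{x_i}(u_1\cdots u_N) = \deg_{x_i}(v_1\cdots v_N)$ for $i<q$, $\deg_{x_q}(u_1\cdots u_N) < \deg_{x_q}(v_1\cdots v_N)$, and (using that both products have total degree $Nd$) $q < n$. Proposition~\ref{elexchange} then yields $\delta$ and $j$ with $q<j\leq n$, $j \in \supp(u_\delta)$, and $x_qu_\delta/x_j \in I$; since $I$ is generated in a single degree, $x_qu_\delta/x_j \in G(I)$. Choosing the largest such $j$ and setting $w = x_qu_\delta/x_j$ produces a binomial $x_qt_{u_\delta} - x_jt_w \in \mathcal{G}_2$ whose initial monomial $x_qt_{u_\delta}$ divides $\xb^\alpha t_{u_1}\cdots t_{u_N}$ (as $\alpha_q \geq 1$), contradicting standardness.

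Finally I would verify reducedness. The initial monomials in the two families have incompatible shapes ($t$-only versus containing one $x_i$), so no cross-family divisibility arises, and within each family the initial monomial determines the binomial. The non-initial terms are also standard: $t_{v'}t_{w'}$ is sorted by construction and involves no $x$'s, while for a binomial $x_it_v - x_jt_w \in \mathcal{G}_2$ the non-initial term $x_jt_w$ is standard precisely because of the maximality of $j$: if some $j'>j$ satisfied $x_{j'}\mid w$ and $x_jw/x_{j'} \in G(I)$, then $x_jw/x_{j'} = x_iv/x_{j'} \in G(I)$ with $j'>j$, contradicting the choice of $j$ as the largest valid index for the initial pair $(i,v)$.
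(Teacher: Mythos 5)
Your proof is correct and follows essentially the same route as the paper: the key input in both is the $\ell$-exchange property established in Proposition~\ref{elexchange}. The paper simply invokes \cite[Theorem 5.1]{HHV05} at this point, while you unfold that general theorem's argument (injectivity of the monomial map on standard monomials, with the $\ell$-exchange property supplying the contradiction when the $x$-parts differ, plus the reducedness check via maximality of $j$) in the present setting; the underlying reasoning is the same.
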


\begin{proof}
By Proposition~\ref{elexchange}, we know that the principal Borel ideal $B_t(u)$ satisfies the 
$\ell$-exchange property with respect to the sorting order $<_{sort}.$ Thus, the statement follows by applying \cite[Theorem 5.1]{HHV05}.
\end{proof}

\begin{Proposition}\label{linear quotients}
All the powers of $B_t(u)$ have linear quotients. In particular, all the powers of $B_t(u)$ have a linear resolution.
\end{Proposition}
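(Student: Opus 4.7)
The plan is to deduce the proposition as a direct corollary of Theorem \ref{GBRees} via the so-called $x$-condition on the Rees algebra. Inspecting the reduced Gr\"obner basis of $J$ produced in Theorem \ref{GBRees}, one observes that every element has degree at most $1$ in each variable $x_i \in S$: the sorting binomials $t_v t_w - t_{v^\prime} t_{w^\prime}$ involve no $x$-variables at all, while the binomials $x_i t_v - x_j t_w$ carry exactly one $x$-variable on each side. Hence $\MR(B_t(u))$ satisfies the $x$-condition with respect to the monomial order $<$ on $R$ constructed just before Theorem \ref{GBRees}.

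Having verified the $x$-condition, the next step is to invoke the general theorem of Herzog, Hibi, and Vladoiu: whenever the defining ideal of the Rees algebra of an equigenerated monomial ideal $I$ admits a Gr\"obner basis each of whose elements has degree at most one in every $x$-variable, then $I^k$ has linear quotients for every $k\geq 1$, with respect to the lexicographic order on $G(I^k)$ inherited from the variable order on $S$; see \cite[Theorem 5.2]{HHV05}. Applying this to $I = B_t(u)$ immediately yields that every power has linear quotients.

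Finally, since $B_t(u)^k$ is generated in the single degree $kd$ (where $d = \deg u$), having linear quotients automatically implies having a linear resolution, by the standard implication recorded in \cite[Proposition 8.2.1]{HHBook}.

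The bulk of the conceptual work was already carried out in Proposition \ref{elexchange} and Theorem \ref{GBRees}; the only thing left here is to recognize that the explicit form of the Gr\"obner basis satisfies the hypothesis of the $x$-condition theorem, which is immediate by inspection. Consequently, I do not anticipate any serious obstacle beyond correctly matching notation with the cited results.
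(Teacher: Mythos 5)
Your proposal is correct and follows essentially the same route as the paper: verify the $x$-condition by inspecting the Gr\"obner basis from Theorem~\ref{GBRees}, then invoke the Herzog--Hibi--Vladoiu criterion (the paper cites \cite[Theorem 10.1.9]{HHBook}, you cite the original \cite{HHV05}, but it is the same result). One small caveat: your parenthetical claim that the resulting linear quotients are with respect to the lexicographic order on $G(I^k)$ is not what the $x$-condition theorem gives — in fact the paper's later Remark explicitly notes that the order produced by this route does \emph{not} coincide with the lexicographic order used in Theorem~\ref{limdepth} — but this does not affect the validity of the proposition, which only requires \emph{some} admissible order.
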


\begin{proof}
By Theorem~\ref{GBRees}, it follows that every binomial in the reduced Gr\"obner basis of the toric ideal $J$  has the degree in variables $x_i$ at most $1$ which means that $B_t(u)$
satisfies the $x$-condition. Then, by \cite[Theorem 10.1.9]{HHBook}, all the powers of $B_t(u)$ have linear quotients. 
\end{proof}

Theorem~\ref{GBRees} shows that the Rees algebra $\MR(B_t(u))$ has a quadratic Gr\"obner basis and the initial ideal of the toric ideal $J$ is squarefree. Therefore, we get the following 
consequences.

\begin{Corollary}
The Rees algebra $\MR(B_t(u))$ is Koszul.
\end{Corollary}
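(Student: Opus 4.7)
The plan is to invoke the standard fact that if a standard graded $K$-algebra is defined by a toric (or more generally binomial/monomial) ideal that admits a quadratic Gröbner basis, then the algebra is Koszul. This is the classical result of Fröberg, see for instance \cite[Theorem 2.28]{EHBook}, which states that any $K$-algebra of the form $R/\ini_<(J)$, where $\ini_<(J)$ is a quadratic monomial ideal, is Koszul, and that the Koszul property is inherited by $R/J$ from $R/\ini_<(J)$.

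First, I would recall the presentation $\varphi\colon R = S[\{t_v : v \in G(I)\}] \to \MR(B_t(u))$ set up just before Theorem~\ref{GBRees}, and observe that $R$ is a polynomial ring generated in degree one (with the natural bigrading collapsed to a single standard grading in which both $x_i$ and $t_v$ have degree one after an appropriate rescaling, which is what the Koszul property requires). Then I would read off from Theorem~\ref{GBRees} that the reduced Gröbner basis of the defining ideal $J = \ker\varphi$ with respect to the order $<$ consists of two families of binomials: the sorting relations $t_v t_w - t_{v'} t_{w'}$, which are quadratic in the $t$-variables, and the relations $x_i t_v - x_j t_w$, which are bilinear in the $x$- and $t$-variables. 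Both families are quadratic, so $\ini_<(J)$ is generated by squarefree quadratic monomials.

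Applying Fröberg's theorem \cite[Theorem 2.28]{EHBook} to $R/\ini_<(J)$ yields that this initial algebra is Koszul. Since the Koszul property is preserved under the standard Gröbner deformation from $R/\ini_<(J)$ back to $R/J \iso \MR(B_t(u))$, we conclude that $\MR(B_t(u))$ is Koszul. There is no real obstacle here; the whole content has been done in Theorem~\ref{GBRees}, and this corollary is just the translation of the quadratic Gröbner basis into the Koszul conclusion via a cited general result.
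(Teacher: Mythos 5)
Your proof is correct and follows the same route the paper takes: the paper gives no separate argument for this corollary beyond the remark that Theorem~\ref{GBRees} produces a quadratic Gr\"obner basis, from which Koszulness follows by the standard G-quadratic-implies-Koszul principle (Fr\"oberg). Your additional comment on the grading (that both $x_i$ and $t_v$ should be viewed as having degree one so that both families of binomials are homogeneous of degree two) is exactly the implicit normalization the paper relies on, so nothing is missing.
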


\begin{Corollary}\label{CMRees}
The Rees algebra $\MR(B_t(u))$ is a normal Cohen-Macaulay domain. In particular, $B_t(u)$ satisfies the strong persistence property.
Therefore, $B_t(u)$ satisfies the persistence property.
\end{Corollary}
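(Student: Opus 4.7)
The plan is to read off three facts from the Gröbner basis description in Theorem~\ref{GBRees} and then invoke standard results to chain them into the corollary.

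First, I would observe that the initial ideal $\ini_<(J)$ is \emph{squarefree}. Each binomial in the Gröbner basis produced by Theorem~\ref{GBRees} has an initial monomial of one of two shapes: $t_v t_w$ for an unsorted pair $(v,w)$ with $v\neq w$, or $x_i t_v$ with $i\neq j$ (from the exchange binomials $x_i t_v - x_j t_w$). In both cases the initial monomial is a product of two distinct variables of $R$, so $\ini_<(J)$ is generated by squarefree quadratic monomials. By a classical theorem of Sturmfels (see \cite[Proposition~13.15]{St95} or \cite[Theorem~6.7]{EHBook}), if a toric ideal admits a squarefree initial ideal then the associated semigroup is normal; hence $\MR(B_t(u)) = R/J$ is a normal semigroup ring. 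It is a domain because $J$ is a toric (hence prime) ideal.

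Second, I would invoke Hochster's theorem: every normal affine semigroup ring is Cohen--Macaulay (see \cite[Theorem~6.3.5]{HHBook} or the analogous reference in the monograph used throughout the paper). Combined with the previous step, this yields that $\MR(B_t(u))$ is a normal Cohen--Macaulay domain, which is the first assertion.

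Third, to pass from normality of $\MR(B_t(u))$ to strong persistence, I would apply the result of Herzog--Qureshi \cite{HQ}: if the Rees algebra $\MR(I)$ of a monomial ideal $I$ is a normal domain, then $I$ satisfies the strong persistence property, i.e.\ $I^{k+1}:I = I^k$ for all $k\geq 1$. Since the strong persistence property immediately implies the persistence property $\Ass(S/I^k)\subseteq \Ass(S/I^{k+1})$ for all $k$ (this is a formal consequence: if $\pp\in\Ass(S/I^k)$ is represented as $\pp = I^k : f$, then $\pp = I^{k+1} : (fu)$ for any $u\in I\setminus\pp$, using $I^{k+1}:I=I^k$), the last assertion follows.

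The only place that might look like an obstacle is the first step, but given the explicit form of the Gröbner basis in Theorem~\ref{GBRees} the squarefree-ness is immediate by inspection; so the proof is essentially a short bookkeeping argument followed by citations of Sturmfels, Hochster, and \cite{HQ}.
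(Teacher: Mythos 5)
Your proof follows the paper's argument step for step: from Theorem~\ref{GBRees} one reads off that the initial ideal of $J$ is generated by squarefree quadratic monomials, Sturmfels then gives normality of $\MR(B_t(u))$, Hochster gives Cohen--Macaulayness, and \cite[Corollary~1.6]{HQ} gives strong persistence. One minor inaccuracy in your parenthetical aside: the equality $\pp = I^{k+1}:(fu)$ does \emph{not} hold for an arbitrary $u\in I\setminus\pp$; from $I^{k+1}:I=I^k$ one gets $\pp = I^{k+1}:fI = \bigcap_{u}\bigl(I^{k+1}:fu\bigr)$ with $u$ ranging over generators of $I$, and primality of $\pp$ then lets one choose a \emph{suitable} $u$ for which equality holds. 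This slip does not affect the corollary, since the implication from strong persistence to persistence is already part of \cite{HQ}.
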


\begin{proof}
Since the initial ideal of the toric ideal $J$ of $\MR(B_t(u))$ is squarefree, by a theorem due to Sturmfels \cite{St95}, it follows that $K[B_t(u)]$ is a normal domain. Next, by a theorem of Hochster \cite{Ho72}, it follows that
$K[B_t(u)]$ is Cohen-Macaulay.  The second part follows by using \cite[Corollary 1.6]{HQ}.
\end{proof}

\section{Depth of powers of $t$-spread principal Borel ideals}
\label{three}

In this section, we consider the asymptotic behavior of the depth for $t$-spread principal Borel ideals. 

Let $u=x_{i_1}\cdots x_{i_d}$ be the generator of the $t$-spread principal Borel ideal $I=B_t(u)\subset S=K[x_1,\ldots,x_n].$ We would like to compute the limit of 
$\depth(S/I^k)$ when $k\to \infty.$
Obviously, we may consider $i_d=n$ since otherwise we may reduce to the study of $\depth S'/I^k$ where $S' =K[x_1,\ldots,x_{i_d}].$ 

\begin{Theorem}\label{limdepth}
Let $t\geq 1$ be an integer and $I=B_t(u)\subset S$ the $t$-spread principal Borel ideal generated by $u=x_{i_1}\cdots x_{i_d}$ where $t+1\leq i_1<i_2<\cdots <i_{d-1}<i_d=n.$ Then 
\[
\depth\frac{S}{I^k}=0, \text{ for }k\geq d.
\]
In particular, the analytic spread of $I$ is $\ell(I)=n.$
\end{Theorem}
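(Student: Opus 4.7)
The plan is to exhibit an explicit monomial $w\in S\setminus I^d$ witnessing that $\mm\in\Ass(S/I^d)$, so that $\depth(S/I^d)=0$. By the strong persistence property of $I$ established in Corollary~\ref{CMRees}, this propagates to $\mm\in\Ass(S/I^k)$ and $\depth(S/I^k)=0$ for every $k\geq d$. The ``in particular'' assertion follows from the Cohen--Macaulayness of $\MR(I)$ (Corollary~\ref{CMRees}): via the standard formula $\lim_k\depth(S/I^k)=n-\ell(I)$ available when the Rees algebra is Cohen--Macaulay, the vanishing of the limit forces $\ell(I)=n$.

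The witness I propose is
\[
  w\;=\;\prod_{l=0}^{d} y_l^{\,d-1},\qquad y_l:=x_{1+lt}.
\]
The assumption $i_1\geq t+1$ together with the $t$-spread condition on $u$ yields $i_l\geq 1+lt$ for $l=1,\ldots,d$; in particular $n=i_d\geq 1+dt$, so $y_0,\ldots,y_d$ are all variables of $S$. The degree of $w$ is $(d+1)(d-1)=d^2-1$, strictly less than $d^2$; since $I$ is generated in degree $d$, every nonzero homogeneous element of $I^d$ has degree at least $d^2$, and consequently $w\notin I^d$.

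To factor $x_jw$ for $j\in[n]$, I would introduce the \emph{staircase} monomials
\[
  v_i\;=\;\prod_{l\neq i} y_l,\qquad i=0,\ldots,d.
\]
Each $v_i$ is $t$-spread (retained consecutive $y_l$'s differ by $t$, the omitted position contributes a gap of $2t$), and its sorted index sequence, which equals $1+(l-1)t$ in positions $l\leq i$ and $1+lt$ in positions $l>i$, is bounded componentwise by $(i_1,\ldots,i_d)$ thanks to $i_l\geq 1+lt$; hence $v_i\in G(I)$. A direct expansion yields the key identity
\[
  v'\cdot\prod_{m\in A} v_m\;=\;x_j\,w,
\]
valid for every subset $A\subset\{0,\ldots,d\}$ with $|A|=d-1$ and every monomial $v'=x_j\prod_{m\in A}y_m$. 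If such a $v'$ lies in $G(I)$, then the right-hand side is exhibited as a product of $d$ generators and hence $x_jw\in I^d$.

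It remains to produce, for every $j\in[n]$, a suitable $A$. I would split on the position of $j$ relative to the $y_l$'s:
\begin{itemize}
\item[(i)] If $j=1+lt$, take $A=\{0,\ldots,d\}\setminus\{l,l'\}$ for any $l'\neq l$; then $v'=v_{l'}\in G(I)$.
\item[(ii)] If $y_l<j<y_{l+1}$ for some $0\leq l\leq d-1$, take $A=\{0,\ldots,d\}\setminus\{l,l+1\}$. Writing $j=1+lt+r$ with $1\leq r\leq t-1$, the gaps $x_j-y_{l-1}\geq t+1$ and $y_{l+2}-x_j\geq t+1$ secure the $t$-spread condition on $v'$, while $j\leq i_{l+1}$ and $i_m\geq 1+mt$ give the Borel bound $k_m\leq i_m$ at every position.
\item[(iii)] If $j>y_d$, take $A=\{0,\ldots,d-2\}$; then $x_j$ occupies the last position of $v'$, the gap $x_j-y_{d-2}\geq 2t+1$ is safe, and $j\leq n=i_d$ closes the Borel inequality.
\end{itemize}
The principal obstacle I foresee is case~(ii): one has to track the position of $x_j$ in the sorted support of $v'$ and simultaneously verify the $t$-spread condition together with the componentwise bound against the possibly sharp values $i_m=1+mt$ at every position. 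Once this bookkeeping is done, the factorization identity, the conclusion $\mm\in\Ass(S/I^d)$, its persistence to $k\geq d$, and the analytic-spread consequence all follow at once.
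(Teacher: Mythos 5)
Your proposal is correct, but it follows a genuinely different route from the paper. The paper never passes through associated primes: it uses the $\ell$-exchange property together with a result of Ene--Olteanu to get that \emph{every} power $I^k$ has linear quotients with respect to the lexicographic order, and then computes $\projdim(S/I^k)=n$ directly for each $k\geq d$ by exhibiting a generator $w=v_1\cdots v_d u^{k-d}\in G(I^k)$ (with staircase generators $v_i$ similar in spirit to yours) whose colon ideal against the lex-larger generators contains $x_1,\ldots,x_{n-1}$; Auslander--Buchsbaum then gives depth zero. You instead treat only $k=d$ explicitly, producing the non-generator witness $w=\prod_{l=0}^{d}y_l^{d-1}$ of degree $d^2-1<d^2$, so $w\notin I^d$, while the identity $x_jw=v'\cdot\prod_{m\in A}v_m$ (which I checked: $\prod_{m\in A}v_m=(\prod_l y_l)^{d-1}/\prod_{m\in A}y_m$, so multiplying by $x_j\prod_{m\in A}y_m$ gives $x_jw$) together with your three-case verification that $v'\in G(I)$ shows $\mm\subseteq I^d:w$; properness then forces $I^d:w=\mm$, i.e.\ $\mm\in\Ass(S/I^d)$, and the persistence property from Corollary~\ref{CMRees} propagates this to all $k\geq d$. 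The case analysis is sound, including the boundary cases $l=0$, $l=d-1$ and $j>1+dt$, and the bounds $i_l\geq 1+lt$ do exactly what you need. What each approach buys: the paper's argument is self-contained for every $k\geq d$ and yields the sharper structural fact that the lex colon ideals realize $n-1$ variables (hence $\projdim=n$ on the nose from the linear-quotients formula), whereas your argument avoids the machinery of linear quotients for powers entirely, at the cost of leaning on the persistence property --- which, in this paper, is itself a consequence of the Rees-algebra normality, so no circularity arises. Both proofs obtain $\ell(I)=n$ the same way, from Cohen--Macaulayness of $\MR(I)$ and the limit-depth formula.
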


\begin{proof}
We first observe that the second claim of the theorem follows since it is known that, for any non-zero graded ideal $I,$ $\lim_{k\to \infty} \depth(S/I^k)=n-\ell(I)$ if the Rees algebra $\MR(I)$
is Cohen-Macaulay; see \cite[Proposition 10.3.2]{HHBook}. But the Rees algebra of $B_t(u)$ is indeed Cohen-Macaulay by Corollary~\ref{CMRees}.

In order to prove the first claim, by Auslander-Buchsbaum theorem, we have to show that 
\begin{equation}\label{eq1}
\projdim \frac{S}{I^k}=n \text{ for }k\geq d.
\end{equation}

Since $I$ satisfies the $\ell$-exchange property (Proposition~\ref{elexchange}), by using \cite[Theorem 3.6]{EO}, it follows that $I^k$ has linear quotients with respect to $>_{\lex}$
for all $k\geq 1.$ This implies that we may compute the projective dimension of $S/I^k$ by using \cite[Corollary 8.2.2]{HHBook}. More precisely, if $G(I^k)=\{w_1,\ldots,w_m\}$ 
with $w_1>_{\lex}\cdots >_{\lex}w_m$ and for $1\leq j\leq m,$ $r_j$ is the number of variables which generate the ideal quotient $(w_1,\ldots,w_{j-1}):w_j,$ then
\[
\projdim \frac{S}{I^k}=\max\{r_1,\ldots,r_m\}+1.
\]

Thus, in order to prove (\ref{eq1}), we only need to find a monomial $w\in G(I^k)$ such that the ideal quotient $(w'\in G(I^k): w'>_{\lex} w):w$ is generated by  $n-1$ variables.
Let $w_0=v_1 v_2\cdots v_{d-1}v_d$ where 
\[
v_1=x_1 x_{t+1} \ldots x_{(d-3)t+1} x_{(d-2)t+1} x_n\]
\[
v_2=x_1 x_{t+1} \ldots x_{(d-3)t+1} x_{i_{d-1}} x_n
\]
\[\vdots \]
\[
v_{d-1}=x_1 x_{i_2} \ldots x_{i_{d-2}} x_{i_{d-1}} x_n\]
\[
v_d=u=x_{i_1} x_{i_2} \ldots x_{i_{d-2}} x_{i_{d-1}} x_n.\]

Obviously, all the monomials $v_1,\ldots,v_d$ belong to $G(I).$ Let $k\geq d$ and $w=w_0u^{k-d}\in G(I^k).$ If we show that 
$(w'\in G(I^k): w'>_{\lex} w):w\supseteq (x_1,\ldots,x_{n-1})$ then the proof 
of equality (\ref{eq1}) is completed. 

Let $J=(w'\in G(I^k): w'>_{\lex} w).$ We show that $x_j w\in J$ for all $1\leq j\leq n-1.$ Let $1\leq s\leq d$ such that $i_{s-1}\leq j <i_s,$ where we set $i_0=1.$
We consider the monomial 
\[
v'_{d-s+1}=\frac{x_j v_{d-s+1}}{x_{i_s}}=x_1x_{t+1}\cdots x_{(s-2)t+1}x_j x_{i_{s+1}}\cdots x_{i_{d-1}}x_n.
\]
Clearly, $v'_{d-s+1}\in G(I)$ since $j<i_s$ and $v'_{d-s+1}$ is a $t$-spread monomial. Indeed, we have $i_{s+1}-j>i_{s+1}-i_s\geq t$ and 
\[j-(s-2)t-1\geq i_{s-1}-(s-2)t-1\geq (i_1+(s-2)t)-(s-2)t-1=i_1-1\geq t.
\]
Let $w'_0$ be monomial obtained from  $w_0$ by replacing the monomial $v_{d-s+1}$ with $v'_{d-s+1}$ and let $w'=w'_0 u^{k-d}.$ Then $w'>_{\lex}w,$ thus $w'\in J$ and since $x_j w=x_{i_s}w',$ 
we have $x_jw\in J,$ which implies that $x_j\in J:w.$
\end{proof}

The analytic spread $\ell(I)$ of a graded ideal $I\subset S$ is the Krull dimension of the fiber ring $\MR(I)/\mathfrak{m} \MR(I)$ where $\mathfrak{m}=(x_1,\ldots,x_n).$ In our case, $I=B_t(u)$ is generated in the same degree $d.$ Thus the fiber ring is actually $K[G(B_t(u))].$  Therefore, we get the following consequence. 

\begin{Corollary}
Let $t\geq 1$ be an integer and $B_t(u)\subset S$ the $t$-spread principal Borel ideal generated by $u=x_{i_1}\cdots x_{i_d}$ where $t+1\leq i_1<i_2<\cdots <i_{d-1}<i_d=n.$
Then $\dim K[G(B_t(u))]=n.$
\end{Corollary}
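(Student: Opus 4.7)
The plan is to invoke Theorem~\ref{limdepth} directly, since its second assertion already states that $\ell(B_t(u)) = n$; all that is needed is to reconcile the general definition of analytic spread with the concrete toric $K$-algebra $K[G(B_t(u))]$.

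First I would recall that, by definition, the analytic spread $\ell(I)$ of a graded ideal $I \subset S$ is the Krull dimension of the fiber ring $\MR(I)/\mathfrak{m}\MR(I)$, where $\mathfrak{m}=(x_1,\ldots,x_n)$. Second, I would observe that $I = B_t(u)$ is generated in the single degree $d$; this is the decisive feature, because for an equigenerated monomial ideal the fiber ring is canonically isomorphic to the toric $K$-subalgebra $K[G(I)] \subset S$. Indeed, tensoring the presentation $\varphi: R = S[\{t_v : v \in G(I)\}] \to \MR(I)$ with $S/\mathfrak{m}$ over $S$ kills the $x_i$ and identifies each generator $t_v$ with the element $v$ of $K[G(I)]$, yielding a surjection $K[\{t_v\}] \twoheadrightarrow \MR(I)/\mathfrak{m}\MR(I)$; because all generators of $I$ have the same degree, the induced map factors through $K[G(I)]$ as an isomorphism.

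Combining these two observations, $\dim K[G(B_t(u))] = \dim \MR(I)/\mathfrak{m}\MR(I) = \ell(B_t(u))$, and the latter equals $n$ by Theorem~\ref{limdepth}. Hence $\dim K[G(B_t(u))] = n$.

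There is essentially no obstacle here: the content of the corollary is entirely contained in Theorem~\ref{limdepth}, and the only work is the standard identification of the fiber ring of an equigenerated monomial ideal with its associated toric algebra.
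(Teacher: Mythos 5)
Your proposal is correct and coincides with the paper's own reasoning: the paper also notes, in the sentence preceding the corollary, that for an equigenerated ideal the fiber ring is $K[G(B_t(u))]$ and then reads off $\ell(B_t(u))=n$ from Theorem~\ref{limdepth}.
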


\begin{Remark}
Note that the order with respect to which we derived that all the powers of $B_t(u)$ have linear quotients in Proposition~\ref{linear quotients} does not coincide with the decreasing lexicographic order that we used in the proof of Theorem~\ref{limdepth}. In fact, the toric ideal of 
$K[B_t(u)]$ does not have a quadratic Gr\"obner basis with respect to the lexicographic order as we can see in the following example. 

Let $u=x_6 x_8 x_{10}\in K[x_1,\ldots,x_{10}]$ and $B_2(u)$ the $2$-spread principal Borel ideal defined by $u$. Let 
$f=t_{u_1}t_{u_2}t_{u_3}-t_{v_1}t_{v_2}t_{v_3}$ with $u_1=x_1x_3x_8$, $u_2=x_1x_7x_9$, $u_3=x_2x_4x_6$ and $v_1=x_1x_3x_9$, $v_2=x_1x_6x_8$, $v_3=x_2x_4x_7$. Then $f$ is a binomial in the toric ideal of $K[B_2(u)]$ whose initial monomial with respect to the lexicographic order is $t_{u_1}t_{u_2}t_{u_3}.$ One can easily see that there is no quadratic monomial in the initial ideal of the toric ideal which divides 
$t_{u_1}t_{u_2}t_{u_3}.$ This shows that, with respect to the lexicographic order, the reduced Gr\"obner basis of the toric ideal of 
$K[B_2(u)]$ is not quadratic. 
\end{Remark}

\begin{Remark}
Condition $i_1\geq t+1$ in the hypothesis of Theorem~\ref{limdepth} is essential. Indeed, let us consider 
$I=B_t(u)$ where $u=x_t x_{2t}\cdots x_{dt}$, the $t$-spread Veronese ideal in $n=dt$ variables. We claim that 
\[
\depth \frac{S}{I^k}=d-1, \text{ for }k\geq d,
 \] which implies that 
\[
\lim_{k\to\infty} \depth \frac{S}{I^k}=d-1.
\]
In particular, this implies that the analytic spread of $I$ is  $\ell(I)=n-d+1=\dim K[B_t(u)].$
Indeed, we show that 
\[
\projdim \frac{S}{I^k}=n-d+1, \text{ for }k\geq d.
\] As in the proof of Theorem~\ref{limdepth}, one  considers the monomials \[v_1=x_1x_{t+1}\cdots x_{(d-2)t+1}x_{dt},\ldots,
v_{d-1}=x_1 x_{2t}\cdots x_{dt}, v_d=u=x_t x_{2t}\cdots x_{dt},\] and shows that, for $k\geq d,$ 
\[(w^\prime\in G(I^k):w^\prime >_{\lex} w):w\supseteq (x_j: j\in [n]\setminus \supp(u)),
\] where $w=v_1\cdots v_d u^{k-d}.$

On the other hand, we show that for every $j\in \supp(u),$ $x_j$ does not appear among the variables which generate the colon ideals 
 of $I^k.$ Let us assume that there exists $k\geq 1$ and some monomial $\mu\in I^k,$ $\mu=\mu_1\cdots \mu_k$ with $\mu_1,\ldots, \mu_k\in I$, 
such that $x_j\in (\mu^\prime\in I^k: \mu^\prime>_{\lex }\mu)$ for some $j\in \supp(u).$ This implies that there exists 
$\mu^\prime=\mu_1^\prime\cdots \mu_k^\prime\in I^k$ 
and some integer $s>j$ such that $x_j \mu= \mu^\prime x_s.$  Let $j=qt$ for some $1\leq q\leq d. $
Then
\[
\sum_{\ell=(q-1)t+1}^{qt}\deg_{x_{\ell}}(x_j \mu)=\sum_{\ell=(q-1)t+1}^{qt}\deg_{x_{\ell}}(x_j \mu_1\cdots \mu_k)=k+1> k=
\]
\[
=\sum_{\ell=(q-1)t+1}^{qt}\deg_{x_{\ell}}( \mu_1^\prime\cdots \mu_k^\prime x_s)=\sum_{\ell=(q-1)t+1}^{qt}\deg_{x_{\ell}} (\mu^\prime x_s),
\] contradiction. Therefore, the maximal number of variables which generate the colon ideals of $I^k$ for $k\geq d$ is $n-d,$ which implies that $\projdim \frac{S}{I^k}=n-d+1, \text{ for }k\geq d.$

\end{Remark}

%\begin{Example}
% We denote by $\mathcal{G}$ the reduced Gr\"obner basis of $B_2(u)$ with respect to the lexicographic order. Then the binomial $f=t_{u_1}t_{u_2}t_{u_3}-t_{v_1}t_{v_2}t_{v_3}$ with $u_1=x_1x_3x_8$, $u_2=x_1x_7x_9$, $u_3=x_2x_4x_6$ and $v_1=x_1x_3x_9$, $v_2=x_1x_6x_8$, $v_3=x_2x_4x_7$ belongs to $\mathcal{G}$. Indeed, if we find $0\neq g=t_{u'_1}t_{u'_2}-t_{v'_1}t_{v'_2}\in \mathcal{G}$ such that $\ini_{<}(g)=t_{u'_1}t_{u'_2}\mid \ini_{<}(f)=t_{u_1}t_{u_2}t_{u_3}$, then we have the following three cases:
%\begin{enumerate}
%\item $u'_1u'_2=u_1u_2$. It follows that $g=0$ or $u'_1=x_1x_8x_9$, which is false because $u'_1$ is $2$-spread monomial.
%\item $u'_1u'_2=u_1u_3$. It implies that $g=0$ or ($v'_1=x_1x_3x_6$ and $v'_2=x_2x_4x_8$), which is false since $\ini_{<}(g)=t_{v'_1}t_{v'_2}$.
%\item $u'_1u'_2=u_2u_3$. Then $g=0$ or $\ini_{<}(g)=t_{v'_1}t_{v'_2}$, a contradiction.
%\end{enumerate}
%\end{Example}

\end{document}